\renewcommand{\O}{\mathcal{O}}
\newcommand{\qq}{\mathbb{Q}}
\newcommand{\zz}{\mathbb{Z}}
\renewcommand{\O}{\mathcal{O}}
\newcommand{\RaG}{R\langle G \rangle}
\newcommand{\FaG}{F\langle G \rangle}
\newcommand{\fHom}{\mathscr{H}\hspace{-2pt}om}
\newcommand{\Res}{\text{Res}}
\newcommand{\End}{\operatorname{End}}
\newcommand{\Frac}{\operatorname{Frac}}
\newcommand{\Gal}{\operatorname{Gal}}
\newcommand{\Hom}{\operatorname{Hom}}
\newcommand{\rk}{\operatorname{rk}}
\newcommand{\opp}{\operatorname{opp}}
\newcommand{\Char}{\operatorname{char}}
\newcommand{\Spec}{\operatorname{Spec}}
\newcommand{\isog}{\operatorname{isog}}
\newtheorem{thm}{Theorem}[section]
\newtheorem{ithm}{Theorem}
\newtheorem{lem}[thm]{Lemma}
\newtheorem{prop}[thm]{Proposition}
\newtheorem{icor}[ithm]{Corollary}
\theoremstyle{definition}
\newtheorem{example}[thm]{Example}
\theoremstyle{remark}
\title[Abelian varieties isogenous to a power of an elliptic curve]{Abelian varieties isogenous to a power of an \\  elliptic curve over a Galois extension}
\date{\today}
\author{Isabel Vogt}
\address{Department of Mathematics, Massachusetts Institute of Technology, Cambridge, MA 02139}
\email{ivogt@mit.edu}
\begin{document}

\maketitle

\begin{abstract}
Given an elliptic curve $E/k$ and a Galois extension $k'/k$, we construct an exact functor from torsion-free modules over the endomorphism ring $\End E_{k'}$ with a semilinear $\Gal(k'/k)$ action to abelian varieties over $k$ that are $k'$-isogenous to a power of $E$.  As an application, we give a simple proof that every elliptic curve with complex multiplication geometrically is isogenous over the ground field to one with complex multiplication by a maximal order.
\end{abstract}

\section{Introduction}

Let $E$ be an elliptic curve over a field $k$.  As in \cite{bjorn}, the theory of abelian varieties isogenous over $k$ to a power of $E$ is related to the theory of finitely presented torsion-free modules over the endomorphism ring $R_k \colonequals \End_k{E}$.  To recall briefly, there is a functor
\[\fHom_{R_k}(-,E) \colon \left\{ \text{finitely presented} \atop  \text{left $R_k$-modules} \right\}^{\opp} \to \left\{ \text{commutative proper} \atop  \text{$k$-group schemes} \right\},\]
such that for $M$ a finitely presented $R_k$-module and $C$ a $k$-scheme, we have
\[\Hom_k(C, \fHom_{R_k}(M, E)) = \Hom_{R_k}(M, \Hom_k(C, E)). \]
Restricting to torsion-free modules, we obtain a functor
\[\fHom_{R_k}(-,E) \colon \left\{ \text{fin. pres. tors. free} \atop  \text{left $R_k$-modules} \right\}^{\opp} \to \left\{ \text{abelian $k$-varieties} \atop  \text{isogenous to $E^r$, $r\in \zz$} \right\}.\]
This functor is fully faithful, but not in general an equivalence of categories.  For example, if $\Char k =0$ and $E$ does not have complex multiplication, then the image of this functor consists entirely of the powers of $E$; yet it is possible for $E$ to be $k$-isogenous to a non-isomorphic curve.

In fact, this functor is never surjective if $E/k$ acquires complex multiplication (CM) over a separable quadratic extension $k'/k$, as we now show.  Suppose that $E_{k'}$ has CM by the order $\O$.  Let $E'$ be the $k'/k$ quadratic twist of $E$.  Let $\alpha \in \O$ be a purely imaginary element of $\O$, so that the nontrivial element $\sigma \in \Gal(k'/k)$ acts as ${}^\sigma \alpha = -\alpha$.  Multiplication by $\alpha$ composed with the isomorphism $E_{k'} \xrightarrow{\simeq} E'_{k'}$ is Galois stable, and so descends to an isogeny $E \to E'$ over $k$.

In this note we describe a generalization of the functor $\fHom_{R_k}(-,E)$, which in particular addresses the case when $\End_{k}E \neq \End_{\bar{k}}E$.   We will show that the essential image of this functor always contains all of the quadratic twists of $E$.  

More generally, one may also consider abelian varieties, as in the case of nontrivial twists, that are isogenous to a power of an elliptic curve only after passing to a Galois extension of the ground field.  When restricted to torsion-free modules, the image of the functor we construct will lie in the category of abelian varieties that become isogenous to a power of $E$ over a Galois extension.  This functor may therefore shed light upon abelian varieties that are not isogenous to a power of $E$ over the ground field, and therefore missed by the previous functor, but become isogenous to a power of $E$ after making a suitable extension.

Let $k'/k$ be a finite Galois extension and let $G \colonequals \Gal(k'/k)$.  As every proper commutative group scheme over a field is projective, the category of commutative proper $k$-group schemes is equivalent to the category of commutative proper $k'$-group schemes equipped with descent data for $k'/k$.  For this reason we may identify commutative proper $k$-group schemes with commutative proper $k'$-group schemes with an action of $G$, such that all maps, including the structure maps, are $G$-equivariant. 

Let $R = R_{k'} \colonequals \End E_{k'}$ be the endomorphism ring of the base change $E_{k'}$ of $E$ to $k'$.  In particular, $R$ is noetherian.  This inherits an action of $G$.  We denote this action by $r \mapsto {}^\sigma r$ for $\sigma \in G$ and $r \in R$.  We may then form the twisted group ring $\RaG$ as the free $R$-module
\[\RaG = \bigoplus_{\sigma \in G} R \cdot \sigma \]
with the commutation relation $\sigma r = {}^\sigma r \sigma$.  In this way, a module over $\RaG$ is an $R$-module with a semilinear $G$-action.

We give a construction of the following in Section \ref{cattheory}. 
\begin{ithm}
There exists an exact functor
\[\fHom_{\RaG}(-, E_{k'}) \colon \left\{ \text{fin. pres. left} \atop  \text{$\RaG$-modules} \right\}^{\opp} \to \left\{ \text{commutative proper} \atop  \text{$k$-group schemes} \right\},\]
such that
\begin{enumerate}
\item For a finitely presented $\RaG$-module $M$ and any $k$-scheme $C$, we have
\[\Hom_{k}(C, \fHom_{\RaG}(M, E_{k'})) = \Hom_{\RaG}(M, \Hom_{k'}(C_{k'}, E_{k'})). \]
\item The functor agrees with $\fHom_R(-,E_{k'})$ under the forgetful functors mapping $\RaG$-modules to $R$-modules and base-changing $k$-schemes to $k'$-schemes.  
\end{enumerate}
\end{ithm}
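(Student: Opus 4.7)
The plan is to build the functor by combining $\fHom_R(-, E_{k'})$ with the Galois descent equivalence recalled just before the theorem statement. Given a finitely presented left $\RaG$-module $M$, I first forget the $G$-action to view $M$ as a finitely presented $R$-module and set $X' := \fHom_R(M, E_{k'})$, which is a commutative proper $k'$-group scheme. Property (2) will be immediate from the construction, since the base change of the descended object back to $k'$ simply recovers $X'$.

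To produce descent data on $X'$, recall that an $\RaG$-structure on $M$ is, by definition, a collection of $\sigma$-semilinear $R$-module automorphisms $\phi_\sigma: M \to M$ satisfying $\phi_{\sigma\tau} = \phi_\sigma \circ \phi_\tau$, and that $E_{k'}$ carries a canonical semilinear $G$-action $\sigma_E$ because $E$ descends to $k$. For each $\sigma \in G$, I define a semilinear automorphism $\psi_\sigma$ of $X'$ by specifying its action on functors of points: for a $k$-scheme $C$, $\psi_\sigma$ sends $f \in \Hom_R(M, E_{k'}(C_{k'}))$ to the map $m \mapsto \sigma_E \cdot f(\phi_{\sigma^{-1}}(m))$, where $\sigma_E$ also acts on $E_{k'}(C_{k'})$ through the combined $G$-action on $E_{k'}$ and on $C_{k'}$. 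The identity ${}^\sigma r = \sigma_E \circ r \circ \sigma_E^{-1}$ in $R$ ensures $\psi_\sigma(f)$ remains $R$-linear, and the relations $\phi_{\sigma\tau} = \phi_\sigma \phi_\tau$ together with $(\sigma\tau)_E = \sigma_E \tau_E$ translate immediately into the cocycle identity $\psi_{\sigma\tau} = \psi_\sigma \circ \psi_\tau$. The equivalence of categories recalled in the introduction then produces a commutative proper $k$-group scheme $X := \fHom_{\RaG}(M, E_{k'})$.

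For property (1), descent gives $\Hom_k(C, X) = \Hom_{k'}(C_{k'}, X')^G = \Hom_R(M, E_{k'}(C_{k'}))^G$ for any $k$-scheme $C$, where the $G$-action on the $\Hom$ set is defined by the $\psi_\sigma$. Unraveling, $f$ is $G$-invariant precisely when $f(\phi_\sigma(m)) = \sigma_E \cdot f(m)$ for all $\sigma$ and $m$, and this combined with $R$-linearity is exactly $\RaG$-linearity, since $\RaG$ is generated by $R$ and $\{\sigma\}_{\sigma \in G}$ subject to the commutation relation $\sigma r = {}^\sigma r\, \sigma$. Exactness of $\fHom_{\RaG}(-, E_{k'})$ then follows from exactness of $\fHom_R(-, E_{k'})$, established in \cite{bjorn}, together with the fact that Galois descent is an equivalence of categories and hence preserves short exact sequences.

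The step I expect to require the most care is bookkeeping with the semilinearities — keeping track of whether $\phi_\sigma$ or $\phi_{\sigma^{-1}}$ and whether $\sigma_E$ or $\sigma_E^{-1}$ appears in each position, so that the $\psi_\sigma$ form a genuine cocycle rather than an anti-cocycle. Closely related, one must check that each $\psi_\sigma$ respects the group law on $X'$, so that the descent lives in the category of commutative $k$-group schemes rather than merely $k$-schemes; this should follow formally from the fact that the group law on $X'$ comes from the target $E_{k'}$ (equivariant under $\sigma_E$) and that the construction of $\psi_\sigma$ is functorial. Once these conventions are pinned down, no further obstruction should appear.
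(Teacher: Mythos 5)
Your proof takes a genuinely different route from the paper's. The paper constructs the $k$-group scheme directly: it first shows $\fHom_{\RaG}(\RaG, E_{k'}) = \Res_{k'/k}E_{k'}$ via the universal property of Weil restriction, observes that this object carries an $\RaG$-action, and then takes the kernel of the map $(\Res_{k'/k}E_{k'})^n \to (\Res_{k'/k}E_{k'})^m$ induced by a finite $\RaG$-presentation of $M$ in the abelian category of commutative proper $k$-group schemes. Representability and exactness then come for free from the existence of kernels and a two-out-of-three comparison with the module-level presentation; property (2) is proved separately afterward (the paper's Lemma \ref{basechange}), by an explicit comparison of functors of points involving $\RaG \cong \Hom_R(\underline{\RaG}, -)$ and the splitting $k'\otimes_k k' \cong \prod_{\sigma} k'$.

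You instead build $X' := \fHom_R(M, E_{k'})$ over $k'$ and then descend it using the $\RaG$-structure on $M$ and the canonical $G$-semilinear action on $E_{k'}$. What this buys you is that property (2) is automatic, whereas the paper must prove it as a lemma. The trade-off is that you must exhibit honest descent data, i.e.\ $\sigma$-semilinear morphisms of the $k'$-scheme $X'$, and this is where the care you flag really lives. There is one point worth sharpening: you define $\psi_\sigma$ by its action on $\Hom_R(M, E_{k'}(C_{k'}))$ for $k$-schemes $C$, but a natural $G$-action on that functor gives an action on $\Res_{k'/k}X'$, which is strictly more than what descent data on $X'$ supplies. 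To land the descent datum you should either define the same formula on the functor of points over $k'$-schemes (so that $\psi_\sigma$ becomes a morphism $X' \to {}^\sigma X'$ of $k'$-schemes, using $\sigma$-semilinear functoriality of $\fHom_R(-,-)$ in both slots), or compute $X'$ as a kernel of a map $E_{k'}^{a} \to E_{k'}^{b}$ coming from an $R$-presentation of $M$ obtained by forgetting a $\RaG$-presentation, making the map visibly $G$-equivariant. Either fix is routine; once pinned down, the cocycle checks and the identification of $G$-invariants with $\RaG$-linear maps go through exactly as you outline, and the exactness argument via faithfully flat descent is correct and matches the paper's Proposition \ref{nice_prop}(1).
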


In particular, this shows that if $M$ is an $\RaG$-module that is torsion-free as an $R$-module, then $\fHom_{\RaG}(M, E_{k'})$ is an abelian variety $A$ over $k$, such that $A_{k'}$ is isogenous to a power of $E$.  However, we may say something about the $k$-isogeny class of $A$ as well.  Recall that the category of abelian varieties over $k$ up to isogeny has the same objects as the category of abelian varieties over $k$, but all isogenies are inverted.

Let $F = R \otimes_\zz \qq$.  Since $F$ is semisimple, the category of finitely presented modules over the twisted group algebra $\FaG$ is semisimple (as in the case when the action of $G$ on $F$ is trivial, see Lemma \ref{semisimple} below).  Let $S_1, \cdots, S_\ell$ denote the simple objects.

\begin{ithm}\label{isog}
There is a functor
\[\fHom_{\FaG}(-, E_{k'}) \colon \left\{\text{ fin. pres.} \atop  \text{$\FaG$-modules} \right\}^{\opp} \to \left\{ \text{abelian varieties over $k$} \atop \text{up to isogeny} \right\},\]
compatible with the functor $\fHom_{\RaG}(-, E_{k'})$.
For any torsion-free finitely presented $\RaG$-module $M$, $\fHom_{\RaG}(M, E_{k'})$ is isogenous over $k$ to a product of powers of the abelian varieties $\fHom_{\FaG}(S_i, E_{k'})$.
\end{ithm}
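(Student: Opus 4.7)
The plan is to construct $\fHom_{\FaG}(-, E_{k'})$ by reducing to $\fHom_{\RaG}(-, E_{k'})$ via ``clearing denominators,'' and then to read off the isogeny decomposition from the semisimple decomposition of $\FaG$-modules. Given a finitely presented $\FaG$-module $N$, I would pick any finitely presented $\RaG$-module $M$ that is torsion-free over $\zz$ with $M \otimes_\zz \qq \cong N$ as $\FaG$-modules, and set $\fHom_{\FaG}(N, E_{k'}) \colonequals \fHom_{\RaG}(M, E_{k'})$, viewed in the $k$-isogeny category. Such a lift $M$ exists: choose a presentation $\FaG^{\oplus a} \to \FaG^{\oplus b} \to N \to 0$, clear denominators in the defining matrix, and take $M$ to be the cokernel of the resulting map $\RaG^{\oplus a} \to \RaG^{\oplus b}$ modulo its $\zz$-torsion subgroup.

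To see that the assignment is independent of the chosen $M$, I would suppose $M_1$ and $M_2$ are two such lifts and observe that the induced isomorphism $M_1 \otimes_\zz \qq \xrightarrow{\sim} M_2 \otimes_\zz \qq$, after scaling by a suitable positive integer, descends to an injection $M_1 \hookrightarrow M_2$ of $\RaG$-modules whose cokernel $T$ is $\zz$-torsion. Applying the exact functor $\fHom_{\RaG}(-, E_{k'})$ to $0 \to M_1 \to M_2 \to T \to 0$ produces a short exact sequence in which $\fHom_{\RaG}(T, E_{k'})$ is a finite commutative $k$-group scheme, so the induced map $\fHom_{\RaG}(M_2, E_{k'}) \to \fHom_{\RaG}(M_1, E_{k'})$ is a $k$-isogeny. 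A morphism of $\FaG$-modules is handled by the same trick (scaling to clear denominators), and the ambiguity in the scaling factor is washed out in the isogeny category since multiplication by an integer on an abelian variety is an isogeny. Exactness and compatibility with $\fHom_{\RaG}(-, E_{k'})$ are then inherited from the $\RaG$ version after rationalizing.

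For the decomposition, I would let $M$ be a torsion-free finitely presented $\RaG$-module and apply Lemma \ref{semisimple} to write $M \otimes_\zz \qq \cong \bigoplus_{i=1}^\ell S_i^{\oplus n_i}$ for some nonnegative integers $n_i$. Since $\fHom_{\FaG}(-, E_{k'})$ is additive (being contravariant and exact), it sends this direct sum to a product, giving $\fHom_{\FaG}(M \otimes_\zz \qq, E_{k'}) \cong \prod_{i=1}^\ell \fHom_{\FaG}(S_i, E_{k'})^{n_i}$ in the $k$-isogeny category. By compatibility of the two functors established above, this product is also the image of $\fHom_{\RaG}(M, E_{k'})$ in the isogeny category, so $\fHom_{\RaG}(M, E_{k'})$ is $k$-isogenous to $\prod_i \fHom_{\FaG}(S_i, E_{k'})^{n_i}$, as desired.

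The main obstacle I anticipate is the careful bookkeeping for well-definedness on morphisms: two different denominator-clearing choices for a single rational map must represent the same morphism in the isogeny category. This reduces to the observation that any two integral lifts of a given rational map between finitely generated torsion-free $\RaG$-modules differ by multiplication by a nonzero integer, and multiplication by an integer is an isomorphism in the isogeny category; still, one must check that this ambiguity interacts cleanly with the universal property characterizing $\fHom_{\RaG}$ from the preceding theorem so that the functoriality diagrams commute on the nose.
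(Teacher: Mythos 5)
Your argument is correct, but it constructs the functor $\fHom_{\FaG}(-, E_{k'})$ by a genuinely different route than the paper. You build the functor ``from below'' by choosing an integral lift: for each finitely presented $\FaG$-module $N$ you pick a torsion-free $\RaG$-module $M$ with $M \otimes_\zz \qq \cong N$, define $\fHom_{\FaG}(N, E_{k'})$ to be the image of $\fHom_{\RaG}(M, E_{k'})$ in the isogeny category, and then verify independence of the choice by scaling to get $M_1 \hookrightarrow M_2$ with $\zz$-torsion cokernel and applying exactness of $\fHom_{\RaG}(-, E_{k'})$ together with the fact that torsion modules go to finite group schemes. The paper instead defines the functor ``from above,'' directly on the isogeny category: $\fHom_{\FaG}(N, E_{k'})$ is by definition the functor $B \mapsto \Hom_{\FaG}(N, \Hom_{\isog}(B_{k'}, E_{k'}))$; an adjunction computation (restriction vs.\ extension of scalars along $\RaG \to \FaG$, plus the fact that $\Hom$ from a finitely presented module commutes with $- \otimes \qq$) shows that for $N = M \otimes_{\RaG} \FaG$ this agrees with $\fHom_{\RaG}(M, E_{k'})$ after localizing, and representability then follows exactly as in Proposition~\ref{representable} since the isogeny category is abelian. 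The paper's route avoids all of the well-definedness bookkeeping you rightly flag as the delicate point: because the functor is given by a formula rather than a choice of lift, representing objects are automatically unique up to canonical isomorphism, and functoriality on morphisms is built in. Your approach buys a more elementary, hands-on construction, at the cost of carefully checking that the isogenies induced by different lifts are compatible, which you address but which requires care to do canonically. Both arguments then invoke the same Lemma~\ref{semisimple} to decompose $M \otimes_\zz \qq$ into simples $S_i^{\oplus n_i}$ and conclude the stated isogeny decomposition by additivity.
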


As an application of this Galois-equivariant functor, we give a simple and new proof of the following old result (see \cite[Proposition 25]{clark}, \cite[Proposition 2.2]{pb}, \cite[Proposition 5.3]{rubin}, \cite[Proposition 2.3b]{kwon} for other proofs of the same or similar results), which is useful in reducing questions about arbitrary CM elliptic curves to those with complex multiplication by a maximal order; this will be used in \cite{vogt} for this very reason.

\begin{icor}\label{maxiso}
Let $k$ be a number field and let $E/k$ be an elliptic curve.  Suppose that $E_{\bar{k}}$ has complex multiplication by an order $\O$ in an imaginary quadratic field $F$.  Then there exists an elliptic curve $E'/k$ and an isogeny 
\[\varphi \colon E' \to E\] 
defined over $k$, such that $E'_{\bar{k}}$ has complex multiplication by the full ring of integers of $F$.
\end{icor}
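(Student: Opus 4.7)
The plan is to apply the Galois-equivariant functor $\fHom_{\RaG}(-,E_{k'})$ to the inclusion $\O \hookrightarrow \O_F$ of the order into its maximal order in $F$. If $E$ already has CM over $k$, one applies the classical functor $\fHom_{R_k}(-,E)$ with $k'=k$ and $G$ trivial and concludes directly, so assume $E$ has no CM over $k$. Then the action of $\Gal(\bar{k}/k)$ on $\End E_{\bar{k}} = \O \subset F$ is nontrivial and by ring automorphisms, hence factors through $\Aut(F/\qq) \cong \zz/2\zz$; the CM is thus acquired over a unique quadratic extension $k'/k$. Set $G = \Gal(k'/k)$ and $R = \End E_{k'} = \O$.

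Next, I equip $\O_F$ with the structure of a left $\RaG$-module: $R$ acts by multiplication, and the nontrivial $\sigma \in G$ acts by the restriction of complex conjugation on $F$, which extends the given action on $R$ and is semilinear. The inclusion $\O \hookrightarrow \O_F$ is then an $\RaG$-linear map between finitely presented $\RaG$-modules. Applying $\fHom_{\RaG}(-,E_{k'})$ and setting $E' \colonequals \fHom_{\RaG}(\O_F, E_{k'})$, the inclusion produces a $k$-morphism $\varphi \colon E' \to \fHom_{\RaG}(R, E_{k'})$.

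The next step is to identify $\fHom_{\RaG}(R, E_{k'})$ with $E$ itself. Using part (1) of the first theorem in the introduction, for any $k$-scheme $C$,
\[\Hom_k(C, \fHom_{\RaG}(R, E_{k'})) = \Hom_{\RaG}(R, \Hom_{k'}(C_{k'}, E_{k'})) = \Hom_{k'}(C_{k'}, E_{k'})^G = \Hom_k(C, E),\]
where the middle equality uses that an $\RaG$-linear map out of $R$ is determined by its value on $1$, which must be $G$-fixed, and the last equality is Galois descent; Yoneda then gives the identification.

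Finally, I verify the remaining properties. By part (2) of the same theorem, $E'_{k'} = \fHom_R(\O_F, E_{k'})$, which by \cite{bjorn} is an elliptic curve over $k'$ with $\End_{k'} E'_{k'} = \End_R(\O_F) = \O_F$; hence $E'$ is an elliptic curve over $k$ whose base change to $\bar{k}$ has CM by the maximal order $\O_F$. The base change $\varphi_{k'}$ corresponds under the functor of \cite{bjorn} to the nonzero inclusion $R \hookrightarrow \O_F$ and is therefore a nonzero morphism of elliptic curves, hence an isogeny; this descends to the desired $k$-isogeny $\varphi$. The most delicate step, and the main technical obstacle, is the identification $\fHom_{\RaG}(R, E_{k'}) \cong E$, which requires correctly unwinding Galois descent in the presence of the semilinear action; once this is established, the rest of the argument is formal.
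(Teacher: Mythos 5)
Your proposal is correct and takes essentially the same approach as the paper: both apply the Galois-equivariant functor $\fHom_{\RaG}(-,E_{k'})$ to the $\O\langle G\rangle$-module $\O_F$ (with $G$ acting by complex conjugation, which is the unique extension of the Galois action on $R=\O$). The only cosmetic difference is that the paper feeds the short exact sequence $0\to\O\to\O_F\to\O_F/\O\to 0$ through the (exact) functor to exhibit the finite kernel directly and quotes $\fHom_{\RaG}(R,E_{k'})\simeq E$ from its Proposition on nice properties, whereas you apply the functor just to the inclusion $\O\hookrightarrow\O_F$, re-derive that identification via the $\Hom_{\RaG}(R,N)=N^G$ computation and Galois descent, and verify the resulting map is an isogeny by base-changing to $k'$ and invoking the equivalence with the non-equivariant functor from \cite{bjorn}.
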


\subsection*{Acknowledgements} I would like to thank Bjorn Poonen for suggesting that the initial functor could be useful in addressing the problem in Corollary \ref{maxiso} and for helpful discussions.  I also thank Pavel Etingof for answering some questions, and Pete Clark for explaining the history of the result in Corollary \ref{maxiso} and providing references.  This research was partially supported by the National Science Foundation Graduate Research Fellowship Program under Grant No.\ 1122374 as well as Grant DMS-1601946.

\section{Categorical Constructions}\label{cattheory}

For any $\RaG$-module $M$, let $\fHom_{\RaG}(M, E_{k'})$ be the functor from the category of $k$-schemes to the category of sets sending a $k$-scheme $C$ to 
\[ \Hom_{\RaG}(M, \Hom_{k'}(C_{k'}, E_{k'})). \]

\begin{prop}\label{representable}
The functor $\fHom_{\RaG}(M, E_{k'})$ is representable by a commutative proper $k$-group scheme.
\end{prop}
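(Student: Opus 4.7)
The plan is to reduce to the already-known functor $\fHom_R(-, E_{k'})$ over $k'$ and then descend to $k$ via Galois descent.

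First, I rewrite the functor as a fixed-point set. For any $k$-scheme $C$, the abelian group $N(C) \colonequals \Hom_{k'}(C_{k'}, E_{k'})$ carries a natural $R$-module structure and a semilinear $G$-action given by $\sigma \cdot \phi \colonequals \sigma_E \circ \phi \circ \sigma_C^{-1}$, where $\sigma_E$ and $\sigma_C$ denote the natural $k$-automorphisms covering $\Spec \sigma$ on $E_{k'}$ and $C_{k'}$. A direct check yields
\[\Hom_{\RaG}(M, N(C)) = \Hom_R(M, N(C))^G,\]
where $G$ acts on the right-hand side by $(\sigma f)(m) \colonequals \sigma \cdot f(\sigma^{-1} m)$. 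Setting $A' \colonequals \fHom_R(M, E_{k'})$, the commutative proper $k'$-group scheme representing the ordinary functor, we then have
\[\fHom_{\RaG}(M, E_{k'})(C) = A'(C_{k'})^G.\]

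Next, I equip $A'$ with a descent datum for $k'/k$. The natural $G$-actions on $E_{k'}$ and on $M$ combine, through functoriality of $\fHom_R$ in each slot, to yield a semilinear $G$-action on $A'$: scheme isomorphisms $\sigma_* \colon A' \to A'$ covering $\Spec \sigma$ and satisfying the cocycle condition $(\sigma \tau)_* = \sigma_* \circ \tau_*$. On $T$-points, $\sigma_*$ is precisely the action $(\sigma f)(m) = \sigma \cdot f(\sigma^{-1} m)$ described above. Since $A'$ is a commutative proper $k'$-group scheme and therefore projective, Galois descent is effective and produces a $k$-scheme $A$ with $A_{k'} \simeq A'$. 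The group-scheme structure maps on $A'$ (identity, multiplication, inverse) are produced by the same categorical construction and are $G$-equivariant, so they descend to endow $A$ with the structure of a commutative proper $k$-group scheme.

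Unwinding Galois descent gives $A(C) = A'(C_{k'})^G = \fHom_{\RaG}(M, E_{k'})(C)$, so $A$ represents the functor. The main difficulty is producing the descent datum rigorously --- upgrading the pointwise $G$-action on the functor of points of $A'$ to honest semilinear scheme morphisms satisfying the cocycle condition --- but this is formal from the functoriality of $\fHom_R(M, -)$ in its second argument combined with the $\RaG$-structure on $M$; effectivity of the descent is then automatic from the projectivity of $A'$.
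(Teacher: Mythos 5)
Your proof is correct, but it takes a genuinely different route from the paper. The paper constructs $A$ directly over $k$: it first identifies $\fHom_{\RaG}(\RaG, E_{k'})$ with the Weil restriction $\Res_{k'/k}E_{k'}$, then chooses a finite presentation $\RaG^m \xrightarrow{X} \RaG^n \to M \to 0$ and defines $A$ as the kernel of the induced map $(\Res_{k'/k}E_{k'})^n \to (\Res_{k'/k}E_{k'})^m$ in the abelian category of commutative proper $k$-group schemes, finally checking representability by comparing the two left-exact sequences obtained from $\Hom_k(C,-)$ and $\Hom_{\RaG}(-,\Hom_{k'}(C_{k'},E_{k'}))$. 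You instead first form $A' = \fHom_R({}_R M, E_{k'})$ over $k'$ using the existing theory, upgrade the $\RaG$-structure on $M$ and the Galois action on $E_{k'}$ to a descent datum on $A'$, and invoke effectivity of descent for projective $k'$-schemes. The paper's approach has the virtue of never handling descent data directly --- the cocycle conditions are absorbed into the abelian-category formalism of kernels --- whereas your approach makes Lemma~\ref{basechange} (that $A_{k'} \simeq \fHom_R({}_R M, E_{k'})$) essentially tautological, at the cost of having to verify carefully that the semilinear automorphisms $\sigma_*$ genuinely satisfy the cocycle condition and are group-scheme morphisms. You correctly flag the construction of the descent datum as the main point requiring care; it does go through as you assert, since $\fHom_R(-,-)$ is functorial in both arguments and the $\sigma$-semilinear maps on $M$ and $E_{k'}$ combine to give the required $\sigma_*$ after pulling back along $\Spec\sigma$, but a referee would likely want those details spelled out.
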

\begin{proof}
Let 
\[\Res_{k'/k}\colon \{\text{comm. proper $k'$-group schemes}\} \to \{\text{comm. proper $k$-group schemes}\}\]
denote Weil restriction of scalars. We first claim that $\fHom_{\RaG}( \RaG, E_{k'}) = \Res_{k'/k} E_{k'}$.
Indeed, by the universal property of the restriction of scalars we have
\begin{align*}
\Hom_k(C, \Res_{k'/k}E_{k'}) &= \Hom_{k'}(C_{k'}, E_{k'}) = \Hom_{\RaG}(\RaG, \Hom_{k'}(C_{k'}, E_{k'})), \\
&= \Hom_k(C, \fHom_{\RaG}(\RaG, E_{k'})), 
\end{align*}
as desired.  Note that the $k$-scheme $\Res_{k'/k}E_{k'}$ has endomorphisms by $\RaG$.

Now let
\begin{equation}\label{Mpres} \RaG^m \to \RaG^n \to M \to 0 \end{equation}
be a finite presentation of $M$ as $\RaG$-modules.  The map $\RaG^m \to \RaG^n$ is given by multiplication on the right by a $m \times n$ matrix $X$.  Multiplication by $X$ on the left also defines a map
\[(\Res_{k'/k}E_{k'})^n \to ( \Res_{k'/k} E_{k'})^m. \]
As commutative proper group schemes over $k$ form an abelian category, the above morphism has a kernel in this category,  
\begin{equation}\label{Apres}0 \to A \to (\Res_{k'/k}E_{k'})^n \to ( \Res_{k'/k} E_{k'})^m. \end{equation}
We claim that $A$ represents the functor $\fHom_{\RaG}(M, E_{k'})$ defined above.  Indeed, we may apply the left-exact functor $\Hom_k(C, -)$ to \eqref{Apres} to obtain
\begin{equation}\label{HomA} 0 \to \Hom_k(C, A ) \to \Hom_{k'}(C_{k'}, E_{k'})^n \to \Hom_{k'}(C_{k'}, E_{k'})^m. \end{equation}
Similarly apply the left-exact functor $\Hom_{\RaG}(-, \Hom_{k'}(C_{k'}, E_{k'}))$ to \eqref{Mpres} to obtain
\begin{equation}\label{HomM}
0 \to \Hom_{\RaG}(M, \Hom_{k'}(C_{k'}, E_{k'})) \to \Hom_{k'}(C_{k'}, E_{k'})^n \to \Hom_{k'}(C_{k'}, E_{k'})^m. 
\end{equation}
As the right maps in both \eqref{HomA} and \eqref{HomM} are induced by multiplication by $X$, the kernels are functorially isomorphic, as desired.
\end{proof}

Recall that we let $F \colonequals R \otimes \qq$.  We now show that we have a compatible functor taking values in the category of abelian varieties over $k$ up to isogeny, as in Theorem \ref{isog}.  For objects $B_1$ and $B_2$ in the isogeny category, denote by $\Hom_{\isog}(B_1, B_2)$ the morphisms in the isogeny category.  

For a finitely presented $\FaG$-module $N$, define the functor $\fHom_{\FaG}(N, E_{k'})$ from the isogeny category of abelian varieties over $k$ to sets by
\[B \mapsto \Hom_{\FaG}(N, \Hom_{\isog}(B_{k'}, E_{k'})). \]
\begin{prop}
The functor $\fHom_{\FaG}(N, E_{k'})$ defined above is represented by an abelian variety in the isogeny category over $k$.
\end{prop}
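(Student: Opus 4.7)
The plan is to parallel the proof of Proposition \ref{representable}, working throughout in the isogeny category of abelian varieties over $k$ (which is abelian, since $\Hom_{\isog}(B_1, B_2) = \Hom_k(B_1, B_2) \otimes_{\zz} \qq$). First I would handle the base case $N = \FaG$ by showing that $\Res_{k'/k} E_{k'}$, regarded as an object of the isogeny category, represents $\fHom_{\FaG}(\FaG, E_{k'})$. This follows by tensoring the chain of identifications in the proof of Proposition \ref{representable} with $\qq$: using that $\FaG = \RaG \otimes_{\zz} \qq$ and $\Hom_{\isog}(-,-) = \Hom(-,-) \otimes_{\zz} \qq$, one obtains
\[\Hom_{\isog}(B, \Res_{k'/k} E_{k'}) = \Hom_{\isog}(B_{k'}, E_{k'}) = \Hom_{\FaG}(\FaG, \Hom_{\isog}(B_{k'}, E_{k'})).\]

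For a general finitely presented $\FaG$-module $N$, I would choose a finite presentation $\FaG^m \xrightarrow{X} \FaG^n \to N \to 0$. After multiplying $X$ by a common denominator (which does not change the image nor the cokernel, as one can rescale the source $\FaG^m$ by the same integer), we may assume that $X$ has entries in $\RaG$, so left multiplication by $X$ gives a morphism $\varphi \colon (\Res_{k'/k} E_{k'})^n \to (\Res_{k'/k} E_{k'})^m$ of commutative proper $k$-group schemes. Since the isogeny category is abelian, $\varphi$ admits a kernel $A$, and I claim $A$ represents $\fHom_{\FaG}(N, E_{k'})$. Applying $\Hom_{\isog}(B, -)$ to the kernel sequence and $\Hom_{\FaG}(-, \Hom_{\isog}(B_{k'}, E_{k'}))$ to the presentation of $N$ yields two four-term left-exact sequences whose right-hand maps are both induced by $X$; under the base-case identification these maps coincide, so their kernels agree functorially in $B$.

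The main obstacle is verifying the last identification rigorously, namely that the induced action of $X$ on $(\Res_{k'/k} E_{k'})^n$ in the isogeny category matches the action of $X$ on $\FaG^n$ under the representability of the base case. This is the naturality of the representing isomorphism in the $\FaG$-slot, and it flows from the observation (made in the proof of Proposition \ref{representable}) that $\Res_{k'/k} E_{k'}$ carries a canonical $\RaG$-action, which extends to an $\FaG$-action upon passing to the isogeny category. Once this compatibility is in place, the representability of $\fHom_{\FaG}(N, E_{k'})$ by $A$ is immediate, as is the compatibility with $\fHom_{\RaG}(-, E_{k'})$ asserted in Theorem \ref{isog}.
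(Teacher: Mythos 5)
Your proposal is correct and follows essentially the same route as the paper: establish that $\Res_{k'/k}E_{k'}$ represents $\fHom_{\FaG}(\FaG, E_{k'})$ in the isogeny category, then exploit that the isogeny category is abelian to build a general $N$ from a finite presentation and take the kernel of the induced map between powers of $\Res_{k'/k}E_{k'}$. The only cosmetic difference is that you clear denominators in the presentation matrix $X$ to land in $\RaG$, whereas the paper works directly with rational morphisms (the $\FaG$-action on $\Res_{k'/k}E_{k'}$ already exists in the isogeny category, since $\End(\Res_{k'/k}E_{k'})\otimes\qq \supseteq \FaG$); both are fine, and the paper's base-case computation is phrased a bit more generally (for $N = M\otimes_{\RaG}\FaG$) so that it simultaneously yields the compatibility assertion of Theorem \ref{isog}.
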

\begin{proof}
As above, let $B$ be an object of the isogeny category of abelian varieties over $k$.
If $N = M\otimes_{\RaG} \FaG$, we have
\begin{align*}
\Hom_{\FaG}(N, \Hom_{\isog}(B_{k'}, E_{k'})) &= \Hom_{\FaG}(M \otimes \FaG, \Hom_{k'}(B_{k'}, E_{k'})\otimes \qq), \\
&= \Hom_{\RaG}(M, {}_{\RaG} \Hom_{k'}(B_{k'}, E_{k'}) \otimes \qq), \\
&= \Hom_{\RaG}(M,  \Hom_{k'}(B_{k'}, E_{k'}) )\otimes \qq.
\end{align*}
And so this functor agrees with the original $\fHom_{\RaG}(-, E_{k'})$ after composing with the localization map to the isogeny category.  

Therefore the functor $\fHom_{\FaG}(\FaG, E_{k'})$ is represented by $\Res_{k'/k}(E_{k'})$ in the isogeny category.  Furthermore, as the isogeny category is an abelian category, the same proof as in Lemma \ref{representable} shows that for all finitely presented $\FaG$-modules $N$, the functor $\fHom_{\FaG}(N, E_{k'})$ is represented by an object in the isogeny category.
\end{proof}

\begin{lem}\label{semisimple}
Let $F$ be a semisimple $\qq$-algebra.  Then $\FaG$ is semisimple.
\end{lem}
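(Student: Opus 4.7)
The plan is a Maschke-style averaging argument. The key observation is that since $F$ is a $\qq$-algebra and $|G|$ is a positive integer, $|G|$ is invertible in $F$, and hence in $\FaG$; this is what makes averaging possible.

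I would reduce to showing that every short exact sequence
\[ 0 \to M' \to M \to M'' \to 0 \]
of $\FaG$-modules splits, as that is equivalent to $\FaG$ being semisimple. Forgetting the twisted $G$-action, the sequence becomes a short exact sequence of $F$-modules (via the inclusion $F \hookrightarrow \FaG$), and so splits by the assumed semisimplicity of $F$, providing an $F$-linear section $s \colon M'' \to M$. I would then form the averaged map
\[ \tilde{s}(m) \colonequals \frac{1}{|G|} \sum_{\sigma \in G} \sigma \cdot s(\sigma^{-1} \cdot m) \]
and verify that it is an $\FaG$-linear section of the quotient.

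The only step requiring care is $F$-linearity of $\tilde{s}$. For $r \in F$, the twisted commutation relation $\sigma^{-1} r = {}^{\sigma^{-1}}\!r \cdot \sigma^{-1}$ moves $r$ past $\sigma^{-1}$ inside the sum; applying $F$-linearity of $s$ pulls ${}^{\sigma^{-1}}\!r$ outside $s$; and finally the relation $\sigma \cdot {}^{\sigma^{-1}}\!r = r \cdot \sigma$ moves $r$ back past $\sigma$ and out of the average as a common factor. The $G$-equivariance of $\tilde{s}$ follows from the substitution $\sigma \mapsto \tau \sigma$ in the summation, and the fact that $\tilde{s}$ is still a section is immediate from the $G$-equivariance of the quotient map $M \to M''$.

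The main obstacle is entirely notational: keeping track of the semilinear commutation relations through the averaging operator. Conceptually the argument is parallel to the classical Maschke theorem once one records that $|G| \in F^{\times}$.
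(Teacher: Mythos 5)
Your proof is correct and takes essentially the same Maschke-style averaging approach as the paper: split the sequence $F$-linearly using semisimplicity of $F$, then average over $G$ using the twisted commutation relation to get an $\FaG$-linear splitting. In fact your version is slightly more careful than the paper's, which writes the averaging map $\pi(w) = \sum_{g\in G} g\varphi(g^{-1}w)$ without the normalizing factor $1/|G|$ (so $\pi$ is $|G|$ times a splitting rather than a splitting, though of course this is harmless since $|G|$ is invertible).
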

\begin{proof}
Let $V \to W$ be a surjection of finite-dimensional left $\FaG$-modules.  As $F$ is semisimple, we may choose a splitting $\varphi \colon W \to V$ as $F$-modules.  The map $\pi \colon W \to V$ given by
\[\pi(w) = \sum_{g \in G} g \varphi( g^{-1} w ),\]
defines a splitting as $\FaG$-modules.
\end{proof}

Since the endomorphism algebra of an elliptic curve is always semisimple, Lemma \ref{semisimple} combined with the above construction proves Theorem \ref{isog}.

We now show compatibility with base change and restriction of scalars.

\begin{lem}\label{restriction}
Let $M$ be a finitely presented left $R$-module and set $A \colonequals \fHom_R(M, E_{k'})$.  Then we have that
\[\Res_{k'/k} A \simeq \fHom_{\RaG}(\RaG \otimes_R M, E_{k'}). \]
\end{lem}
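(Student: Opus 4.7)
The natural plan is to invoke the Yoneda lemma: I will show that both sides represent the same functor on the category of $k$-schemes, and then conclude that they are isomorphic as commutative proper $k$-group schemes. Both $\Res_{k'/k}A$ and $\fHom_{\RaG}(\RaG \otimes_R M, E_{k'})$ are already known to be representable (the latter by Proposition \ref{representable}), so only the computation of their functors of points is needed.

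For the left-hand side, I would first apply the universal property of Weil restriction and then the defining adjunction of $\fHom_R$: for any $k$-scheme $C$,
\[
\Hom_k(C, \Res_{k'/k} A) \;=\; \Hom_{k'}(C_{k'}, A) \;=\; \Hom_R\bigl(M, \Hom_{k'}(C_{k'}, E_{k'})\bigr).
\]
For the right-hand side, the defining property of $\fHom_{\RaG}(-, E_{k'})$ yields
\[
\Hom_k\bigl(C, \fHom_{\RaG}(\RaG \otimes_R M, E_{k'})\bigr) \;=\; \Hom_{\RaG}\bigl(\RaG \otimes_R M, \Hom_{k'}(C_{k'}, E_{k'})\bigr),
\]
and then the standard extension-of-scalars adjunction (tensor-hom adjunction for the inclusion $R \hookrightarrow \RaG$) identifies this with $\Hom_R(M, \Hom_{k'}(C_{k'}, E_{k'}))$. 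Comparing the two computations gives a functorial isomorphism, and Yoneda promotes it to an isomorphism of representing objects.

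The main subtlety to verify — and the only part that is not purely formal — is that the $\RaG$-module structure on $\Hom_{k'}(C_{k'}, E_{k'})$ used in the right-hand computation is the same one used implicitly to make $\fHom_{\RaG}(-, E_{k'})$ a functor in Proposition \ref{representable}. Concretely, since $C$ is defined over $k$, the scheme $C_{k'}$ carries a natural semilinear $G$-action, which combined with the $G$-action on $E_{k'}$ gives a $G$-action on $\Hom_{k'}(C_{k'}, E_{k'})$ compatible with the $R$-action (because for $r \in R$ and $\sigma \in G$, the endomorphism $r$ of $E_{k'}$ and its conjugate ${}^\sigma r$ intertwine with $\sigma$ exactly as in the twisted group ring). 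Once this is checked, the extension-of-scalars adjunction applies verbatim and the identification goes through.

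I expect the argument to be essentially formal, with the only genuine content being this compatibility of the $\RaG$-structure on $\Hom_{k'}(C_{k'}, E_{k'})$; everything else is a chain of adjunctions. No appeal to the presentation \eqref{Mpres} should be necessary, since the argument proceeds uniformly in $M$.
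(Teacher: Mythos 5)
Your proposal is correct and follows essentially the same route as the paper: compute both functors of points via the universal property of Weil restriction on one side and the defining property of $\fHom_{\RaG}$ on the other, match them via the restriction–induction (extension-of-scalars) adjunction, and conclude by Yoneda. The paper cites \cite[Prop 2.8.3(i)]{benson} for the adjunction and does not dwell on the $\RaG$-module structure on $\Hom_{k'}(C_{k'}, E_{k'})$, which you rightly flag as the one implicit nontrivial compatibility.
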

\begin{proof}
By Yoneda's Lemma, it suffices to show that $\Res_{k'/k} A$ and $\fHom_{\RaG}(\RaG \otimes_R M, E_{k'}$ have the same functors of points.  Let $C$ be a $k$-scheme.  By the universal property of restriction of scalars, we have
\[\Res_{k'/k} A ( C) = A(C_{k'}) = \Hom_R(M, \Hom(C_{k'}, E_{k'})). \]
By the adjunction between restriction and induction \cite[Prop 2.8.3(i)]{benson}, we have
\[\Hom_R(M, \Hom(C_{k'}, E_{k'})) \simeq \Hom_{\RaG}(\RaG \otimes M, \Hom(C_{k'}, E_{k'})), \]
which completes the proof.
\end{proof}

For any ring $R$ and any left $R$-algebra $S$, let $\underline{S}$ denote the $(R, S)$-bimodule $S$ under multiplication by $R$ on the left and $S$ on the right.

\begin{lem}\label{basechange}
Let $M$ be a finitely presented left $\RaG$-module and let $A \colonequals \fHom_{\RaG}(M, E_{k'})$.  Then the base change $A_{k'}$ is isomorphic to
\[\fHom_R({}_R M, E_{k'}),\]
where ${}_R M$ denotes the underlying $R$-module of $M$.
\end{lem}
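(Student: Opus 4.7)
The plan is to verify the isomorphism via Yoneda by comparing functors of points on the category of $k'$-schemes. For any $k'$-scheme $D$, write $D_k$ for $D$ viewed as a $k$-scheme via the composition $D \to \Spec k' \to \Spec k$. By the defining property of $A$ from Proposition~\ref{representable},
\[
A_{k'}(D) \;=\; A(D_k) \;=\; \Hom_{\RaG}\bigl(M,\, \Hom_{k'}((D_k)_{k'}, E_{k'})\bigr),
\]
while by definition $\fHom_R({}_R M, E_{k'})(D) = \Hom_R\bigl({}_R M,\, \Hom_{k'}(D, E_{k'})\bigr)$. The coinduction adjunction---complementary to the induction/restriction adjunction used in Lemma~\ref{restriction}---states that $\Hom_R({}_R M, N) \simeq \Hom_{\RaG}\bigl(M,\, \Hom_R(\RaG, N)\bigr)$ for any $R$-module $N$.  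So it suffices to produce a natural isomorphism of left $\RaG$-modules
\[
\Hom_{k'}\bigl((D_k)_{k'}, E_{k'}\bigr) \;\simeq\; \Hom_R\bigl(\RaG,\, \Hom_{k'}(D, E_{k'})\bigr).
\]

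For the underlying set-theoretic bijection, I would use the standard Galois decomposition $k' \otimes_k k' \simeq \prod_{\sigma \in G} k'$ (via $a \otimes b \mapsto (\sigma(a)\,b)_\sigma$) to identify $(D_k)_{k'} \simeq \coprod_{\sigma \in G} D^\sigma$, where $D^\sigma$ denotes $D$ with its $k'$-structure precomposed by $\sigma$. Because $E$ is defined over $k$, the canonical Galois isomorphisms $E_{k'}^\sigma \simeq E_{k'}$ yield bijections $\Hom_{k'}(D^\sigma, E_{k'}) \simeq \Hom_{k'}(D, E_{k'})$, identifying the left side of the displayed isomorphism with $\prod_{\sigma \in G} \Hom_{k'}(D, E_{k'})$. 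The right side is also $\prod_{\sigma \in G} \Hom_{k'}(D, E_{k'})$ since $\RaG = \bigoplus_{\sigma \in G} R\cdot\sigma$ is free of rank $|G|$ as a left $R$-module.

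The main technical obstacle is to verify that this set-theoretic bijection intertwines the left $\RaG$-actions on both sides. On the geometric side, $R$ acts by post-composition with endomorphisms of $E_{k'}$, while $G$ acts through the residual semilinear Galois action on the second factor of $D \times_k \Spec k'$, which both permutes the components $D^\sigma$ and twists morphisms by conjugation on $E_{k'}$. On the algebraic side, $\RaG$ acts via its right-regular action on itself inside $\Hom_R(\RaG, -)$. Unwinding both, one checks that $\tau \in G$ acts via the index shift $(\phi_\sigma)_\sigma \mapsto (\phi_{\sigma\tau})_\sigma$ in both cases, and that the semilinearity relation $\sigma r = {}^\sigma r\,\sigma$ in $\RaG$ corresponds exactly to the fact that $r \in R$ acts on the $\sigma$-component of the left side via the Galois-conjugate endomorphism ${}^\sigma r$ of $E_{k'}$. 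Once this $\RaG$-equivariance is established, Yoneda's lemma delivers the desired isomorphism of $k'$-group schemes.
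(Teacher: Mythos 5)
Your proof is correct and follows essentially the same route as the paper's: Yoneda reduces the claim to a functor-of-points computation, the restriction/coinduction adjunction reduces it further to an isomorphism of left $\RaG$-modules $E_{k'}\bigl(({}_kD)_{k'}\bigr) \simeq \Hom_R(\underline{\RaG}, E_{k'}(D))$, and the Galois splitting $k' \otimes_k k' \simeq \prod_{\sigma \in G} k'$ produces that isomorphism. The only difference is bookkeeping at the end: the paper writes out the explicit tuple actions and the correspondence $g_\sigma = \sigma f_{\sigma^{-1}}$, whereas you untwist via the canonical Galois isomorphisms $E_{k'}^\sigma \simeq E_{k'}$ and leave the $\RaG$-equivariance as a check.
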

\begin{proof}
By Yoneda's Lemma, it suffices to show that $A_{k'}$ and $\fHom_R({}_R M, E_{k'})$ have the same functor of points.  Let $D$ be a $k'$-scheme.  Let ${}_k D$ denote the $k$-scheme whose structure morphism is the
composition $D \to \Spec{k'} \to \Spec{k}$.  By the universal property of fiber products,
\[A_{k'}(D) = A({}_kD) = \Hom_{\RaG}\left(M, E_{k'}\big(({}_kD)_{k'}\big) \right). \]
We are thus reduced to showing that
\[\Hom_{\RaG}\left(M, E_{k'}\big(({}_kD)_{k'}\big) \right) = \Hom_R({}_RM, E_{k'}(D)). \]
Furthermore, by the adjunction between restriction and coinduction \cite[Prop 2.8.3(ii)]{benson}, 
\[\Hom_R({}_RM, E_{k'}(D)) = \Hom_{\RaG}(M, \Hom_R(\underline{\RaG}, E_{k'}(D))). \]
It suffices then to show that, as left $\RaG$-modules,
\[E_{k'}\big(({}_kD)_{k'}\big) \simeq \Hom_R(\RaG, E_{k'}(D)). \]
As $k' \otimes_k k' \simeq \prod_{\sigma \in G} k'$, we have that
\[({}_kD)_{k'} \simeq \amalg_{\sigma \in G} D, \]
where $\tau \in G$ maps the $\sigma$th copy of $D$ to the $\tau\sigma$th copy.  Elements of  $E_{k'}\left(\amalg_{\sigma \in G} D \right)$ can be represented as tuples $f = (f_\sigma)_{\sigma}$ where $f_\sigma \in E_{k'}(D)$ and 
\[({}^{\tau}f)_\sigma = \tau f_{\tau^{-1}\sigma}. \]
Similarly elements of $\Hom_R(\RaG, E_{k'}(D))$ are tuples $g = (g_\sigma)_\sigma$ where $g \in E_{k'}(D)$ and $({}^\tau g)_\sigma = g_{\sigma \tau}$.  The correspondence
\[ g_\sigma = \sigma f_{\sigma^{-1}}, \]
shows that these $\RaG$-modules are isomorphic.
\end{proof}


Note that any finitely presented $R$-module with a semilinear $G$-action is also a finitely presented $\RaG$-module, as we now explain.  As $R$ is noetherian, it suffices to show that any $\RaG$-module $M$, which is finitely generated as an $R$-module, admits a $G$-equivariant surjection from $\RaG^n$ for some $n$.  If $m_1, \cdots, m_r$ are $R$-module generators of $M$, then the map $\RaG^r \to M$ sending $e_i \mapsto m_i$ has the desired properties.  

We have the following nice properties of the functor $\fHom_{R\langle G \rangle}(-, E_{k'})$.

\begin{prop}\label{nice_prop}
Let $E/k$ be an elliptic curve and $k'/k$ a finite Galois extension with Galois group $G$.  Let $R \colonequals \End_{k'}E$ and let $M$ be a finitely presented $\RaG$-module and let $A \colonequals \fHom_{\RaG}(M, E_{k'})$.  
\begin{enumerate}
\item $\fHom_{\RaG}(-, E_{k'})$ is exact.
\item $A$ is a commutative proper group scheme over $k$ of dimension $\rk_R(M)$.
\item If $M$ is torsion-free as an $R$-module, then $A$ is an abelian variety over $k$ such that $A_{k'}$ is isogenous to a power of $E_{k'}$.
\item $\fHom_{\RaG}(R, E_{k'}) = E$.
\end{enumerate}
\end{prop}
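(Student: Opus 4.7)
The plan is to reduce parts (1)--(3) to the corresponding properties of the classical functor $\fHom_R(-, E_{k'})$ via base change, and to prove (4) directly from the universal property and Galois descent.

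For (1)--(3), the main tool is Lemma \ref{basechange}, which gives a canonical isomorphism $A_{k'} \simeq \fHom_R({}_R M, E_{k'})$. Since $k \to k'$ is faithfully flat, a complex of commutative proper $k$-group schemes is exact if and only if its base change to $k'$ is exact, so the exactness of $\fHom_{\RaG}(-, E_{k'})$ follows from the exactness of $\fHom_R(-, E_{k'})$ proved in \cite{bjorn}. For (2), dimension is preserved under base change and ${}_R M$ has the same $R$-rank as $M$, so $\dim A = \dim A_{k'} = \rk_R(M)$ by the analogous dimension statement in \cite{bjorn}. For (3), if $M$ is $R$-torsion-free then ${}_R M$ is a finitely presented torsion-free $R$-module, so by \cite{bjorn} the base change $A_{k'}$ is an abelian variety isogenous to a power of $E_{k'}$; smoothness, properness, and geometric connectedness descend along the faithfully flat extension $k \to k'$, so $A$ itself is an abelian variety over $k$.

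For (4), by Yoneda it suffices to identify the functor of points of $\fHom_{\RaG}(R, E_{k'})$ with that of $E$ on $k$-schemes. Regard $R$ as a left $\RaG$-module under the natural semilinear action $\sigma \cdot r = {}^\sigma r$. A direct check shows that for any $\RaG$-module $N$ the evaluation map $\varphi \mapsto \varphi(1)$ gives
\[\Hom_{\RaG}(R, N) \xrightarrow{\sim} N^G,\]
since $\RaG$-linearity of $\varphi$ with $m := \varphi(1)$ forces ${}^\sigma r \cdot m = \sigma \cdot (r \cdot m) = {}^\sigma r \cdot (\sigma \cdot m)$, equivalently $\sigma \cdot m = m$ for all $\sigma$. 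Applying this with $N = \Hom_{k'}(C_{k'}, E_{k'})$ and invoking Galois descent (using that $E$ is defined over $k$, so $\Hom_{k'}(C_{k'}, E_{k'})^G = \Hom_k(C, E)$) completes the identification.

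The only step requiring genuine care is verifying that the semilinear $\RaG$-module structures on $R$ and on $\Hom_{k'}(C_{k'}, E_{k'})$ are set up so that the bijection $\Hom_{\RaG}(R, -) = (-)^G$ matches Galois descent of morphisms; everything else is a formal consequence of faithfully flat descent together with the known properties of $\fHom_R(-, E_{k'})$.
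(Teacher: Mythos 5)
Your proof is correct and follows essentially the same route as the paper: parts (1)--(3) are reduced, via Lemma \ref{basechange} and faithfully flat descent along $k \to k'$, to the corresponding properties of $\fHom_R(-,E_{k'})$ established in \cite{bjorn}, and part (4) is established by identifying $\Hom_{\RaG}(R,N)$ with $N^G$ and invoking Galois descent of morphisms. The only difference is that you spell out the evaluation-at-$1$ computation showing $\Hom_{\RaG}(R,N)\cong N^G$, which the paper leaves implicit.
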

\begin{proof}
Parts (1) -- (3) follow from the corresponding properties after base-extension to $k'$ \cite[Thm 4.4]{bjorn}.  For part (4), we have
\begin{align*}
\Hom_k(C, \fHom_{\RaG}(R, E_{k'})) &= \Hom_{\RaG}(R, \Hom_{k'}(C_{k'}, E_{k'})) \\
&= \Hom_{k'}(C_{k'}, E_{k'})^G \\
&= \Hom_k(C, E). \qedhere
\end{align*}
\end{proof}

\section{Examples and Applications}

As an example of Theorem \ref{isog}, we have the following in the special case $k'/k$ is a separable quadratic extension.
\begin{prop}
Let $k'/k$ be a separable quadratic extension and set $G \colonequals \Gal(k'/k)$ with nontrivial element $\sigma$.  Let ${}^\sigma E$ denote the corresponding quadratic twist of $E$.  If $M$ is a torsion-free $\RaG$-module, then $\fHom_{\RaG}(M, E_{k'})$,
is isogenous to $E^r \times\left( {}^\sigma E\right)^{r'}$ for some $r,r' \geq 0$.
\end{prop}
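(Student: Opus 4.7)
The plan is to deduce the proposition from Theorem \ref{isog} by classifying, in the isogeny category, the abelian varieties attached to the simple $\FaG$-modules. By Theorem \ref{isog}, $\fHom_{\RaG}(M, E_{k'})$ is $k$-isogenous to a product of powers of $\fHom_{\FaG}(S_i, E_{k'})$, where the $S_i$ range over the simple $\FaG$-modules, so it suffices to show that each $\fHom_{\FaG}(S_i, E_{k'})$ is itself $k$-isogenous to a product of powers of $E$ and ${}^\sigma E$.

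The essential input is semisimplicity. By Lemma \ref{semisimple}, $\FaG$ is semisimple, so every simple left $\FaG$-module is a direct summand of the regular module $\FaG$. Since the functor $\fHom_{\FaG}(-, E_{k'})$ is exact (the argument of Proposition \ref{nice_prop}(1) transports verbatim to the isogeny category), applying it realizes $\fHom_{\FaG}(S_i, E_{k'})$ as a direct factor, in the isogeny category of abelian $k$-varieties, of
\[\fHom_{\FaG}(\FaG, E_{k'}) = \Res_{k'/k} E_{k'}.\]

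The proof is then completed by the classical identification $\Res_{k'/k} E_{k'} \sim E \times {}^\sigma E$ over $k$, which for a separable quadratic extension is a short direct calculation: the two maps induced, on base change to $k'$, by $(x,y) \mapsto x+y$ and $(x,y) \mapsto x-y$ (the latter composed with the twisting isomorphism $E_{k'} \xrightarrow{\sim} ({}^\sigma E)_{k'}$) are Galois equivariant and so descend to $k$-morphisms onto $E$ and ${}^\sigma E$ respectively, together forming a $k$-isogeny with kernel in the $2$-torsion. Since the isogeny category of abelian varieties over $k$ is semisimple, every direct factor of $E \times {}^\sigma E$ in this category is $k$-isogenous to $E^a \times ({}^\sigma E)^b$ for some $a, b \geq 0$, yielding the proposition. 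The only nontrivial step is the identification of the Weil restriction, as this is the sole place in the argument that uses concrete information about $E$ and its quadratic twist; everything else is formal from Theorem \ref{isog} and the semisimplicity of $\FaG$.
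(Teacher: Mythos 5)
Your argument is correct and reaches the same endpoint, but it is packaged around a slightly different pivot than the paper's. Both proofs begin with Theorem~\ref{isog}, which reduces the question to the simple $\FaG$-modules. The paper then proceeds head-on: it exhibits the explicit decomposition $\FaG \cong F \oplus {}^\sigma F$, where ${}^\sigma F$ is $F$ with the sign-twisted $G$-action $\sigma(x) = -{}^\sigma x$, via the isomorphism $\varphi(a,b) = a(\sigma+1) + b(\sigma-1)$, and then reads off $\fHom_{\FaG}(F, E_{k'}) \simeq E$ and $\fHom_{\FaG}({}^\sigma F, E_{k'}) \simeq {}^\sigma E$ directly from functors of points. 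You instead push everything onto the regular representation: each simple is a summand of $\FaG$, exactness lands you inside $\Res_{k'/k}E_{k'}$, and you finish with the classical isogeny $\Res_{k'/k}E_{k'} \sim E \times {}^\sigma E$ plus Poincar\'e complete reducibility. The computational content is really the same --- your Weil-restriction isogeny $(x,y) \mapsto (x+y, x-y)$ is exactly the geometric face of the paper's $\varphi$, with $\sigma + 1$ and $\sigma - 1$ playing the role of the plus and minus projectors --- but the paper's route is more self-contained and pins down which simple module corresponds to which twist, whereas yours outsources that identification to a known fact about Weil restriction and additionally needs semisimplicity of the isogeny category of abelian varieties to split the direct factor. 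Both are valid; the paper's is the leaner argument in context, since it never needs to leave the module category to see the answer.
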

\begin{proof}
As above, we denote the $G$-action on $F$ by $\sigma(x) =  {}^\sigma x$.  By Theorem \ref{isog}, it suffices to decompose $\FaG$ into simple modules.
Let ${}^\sigma F$ denote $F$ endowed with the $G$-action $\sigma (x) = - {}^\sigma x$ for $x\in F$.  Then we have an isomorphism
\[F \oplus {}^\sigma F \xrightarrow{\varphi} \FaG, \]
defined by $\varphi(a,b) = a(\sigma + 1) +  b(\sigma - 1)$.  By comparing functors of points, we have $\fHom_{\FaG}(F, E_{k'}) \simeq E$ and $\fHom_{\FaG}({}^\sigma F, E_{k'}) \simeq {}^\sigma E$ in the isogeny category.\end{proof}

\begin{example}\label{main_ex}
Consider the special case that $k$ is a number field, and $E$ has complex multiplication by an order $\O$ in the imaginary quadratic field $F= \Frac \O$, which is defined over the quadratic extension $k' = kF/k$ with Galois group $G = \{1, \sigma\}$.  Let $M$ be a finitely presented $\O \langle G \rangle$-module that is torsion-free as an $\O$-module.  Then as multiplication by a totally imaginary element of $\O$ defines an isomorphism ${}^\sigma F \simeq F$ of $\FaG$-modules, we have that $\fHom_{\O \langle G \rangle}(M, E_{k'})$ is an abelian variety defined over $k$, which is isogenous over $k$ to a power of $E$.
\end{example}

%

Continuing in the setup of the previous example, we conclude by proving Corollary \ref{maxiso}.

\begin{proof}[Proof of Corollary \ref{maxiso}]
This follows from Example \ref{main_ex} as the full ring of integers $\O_F$ of $F$ is a finitely presented $\O \langle G \rangle$-module, which is torsion-free of rank $1$ over $\O$.
The full endomorphism ring of $E$ is defined over (the at most quadratic extension) $k'$, so we have that $\O = \End E_{k'}$.  This inherits a (possibly trivial) action of $G$.  
We therefore have the following exact sequence of $\O\langle G \rangle$-modules:
\[0 \to \O \to \O_F \to \O_F/\O \to 0, \]
where the action of $G$ is induced from the action on $F$.  Applying the functor $\fHom_{\O\langle G \rangle}(-, E_{k'})$ we obtain an exact sequence
\[0 \to \fHom_{\O\langle G \rangle}(\O_F/\O, E_{k'}) \to E' \to E \to 0, \]
where $E'$ is again an elliptic curve over $k$ and the right map is an isogeny to $E$.  By functoriality, $E'_{k'}$ has an action of $\O_F$, as desired.
\end{proof}

\bibliographystyle{plain}
\bibliography{ehom}

\end{document}